\tikzset{>=stealth}
\def\@tocline#1#2#3#4#5#6#7{\relax
  \ifnum #1>\c@tocdepth % then omit
  \else
    \par \addpenalty\@secpenalty\addvspace{#2}%
    \begingroup \hyphenpenalty\@M
    \@ifempty{#4}{%
      \@tempdima\csname r@tocindent\number#1\endcsname\relax
    }{%
      \@tempdima#4\relax
    }%
    \parindent\z@ \leftskip#3\relax \advance\leftskip\@tempdima\relax
    \rightskip\@pnumwidth plus4em \parfillskip-\@pnumwidth
    #5\leavevmode\hskip-\@tempdima
      \ifcase #1
       \or\or \hskip 2em \or \hskip 2em \else \hskip 3em \fi%
      #6\nobreak\relax
    \dotfill\hbox to\@pnumwidth{\@tocpagenum{#7}}\par
    \nobreak
    \endgroup
  \fi}
\newtheorem{intro-thm}{Theorem}[]
\theoremstyle{plain}
\newtheorem{thm}{Theorem}[section]
\newtheorem{theorem}[thm]{Theorem}
\newtheorem{lemma}[thm]{Lemma}
\theoremstyle{definition}
\newtheorem{remark}[thm]{Remark}
\newcommand{\ilim}{\mathop{\varprojlim}\limits} % inverse limit
\newcommand{\inj}{\hookrightarrow}
\newcommand{\F}{{\mathbb F}}
\newcommand{\N}{{\mathbb N}}
\newcommand{\Z}{{\mathbb Z}}
\let\syn\mathsf
\newcommand{\ca}{\frac{A}{[A,A]}}
\begin{document}

\title[Morphisms between two constructions of Witt vectors of non-commutative rings.]
{Morphisms between two constructions of Witt vectors of non-commutative rings } 

\author[S. Pisolkar]{Supriya Pisolkar} \address{ Indian Institute of
Science, Education and Research (IISER),  Homi Bhabha Road, Pashan,
Pune - 411008, India} \email{supriya@iiserpune.ac.in} 

\date{}
\begin{abstract} 
Let $A$ be any unital associative, possibly non-commutative ring and let $p$ be a prime number.  Let $E(A)$ be the ring of $p$-typical Witt vectors as constructed by Cuntz and Deninger in \cite{cd} and $W(A)$ be the abelian group constructed by Hesselholt in \cite{h1} and \cite{h2}.  In \cite{hp} it was proved that if $ p=2$ and $A$ is non commutative unital torsion free ring then there is no surjective continuous group homomorphism from $W(A) \to HH_0(E(A)): = E(A)/\overline{[E(A),E(A)]}$ which commutes with the  Verschiebung  operator and the Teichm\"uller map. In this paper we generalise this result to all primes $p$ and  simplify the arguments used for $p=2$.  We also prove that if $A$ a is non-commutative unital ring then there is no continuous map of sets $HH_0(E(A)) \to W(A)$ which commutes with the ghost maps. 
 \end{abstract}
\maketitle 
%%%%%%%%%%%%%%%%%%%%%%%%%%%%%%%%%%%%%%%%%%%
\section{Introduction}
Let $p$ be a prime number. Let $A$ be any unital associative, non-commutative ring. In \cite{hp} we compared  two constructions, one of a ring $E(A)$ by Cuntz and Deninger, given in \cite{cd} and the other of an abelian group $W(A)$ given by Hesselholt in \cite{h1} (see also \cite{h2}). Both $E(A)$ and $W(A)$ are topological groups and are equipped with the Verschiebung  operator $V$ and the Teichm\"uller map $\langle \cdot \rangle$. Moreover,  $W(A)$ and $E(A)$ are isomorphic to the classical construction of ring of $p$-typical Witt vectors when $A$ is commutative. It is natural to see how these constructions are related when $A$ is non-commutative. L. Hesselholt asked whether for a associative ring $A$,  $W(A)$ is isomorphic to $HH_0(E(A))$? Although this question is still open, it was proved in \cite[Theorem 1.2]{hp} that for $p=2$ and $A= \Z\{X,Y\}$ there is no continuous surjective group homomorphism from $W(A) \to HH_0(E(A))$ which commutes with $V$ and $\langle \cdot \rangle$. One of the main results of this paper generalises this result to any prime number $p$.  

\begin{theorem}\label{generalise}
Let $A:=\Z\{X,Y\}$ and $p$ be any prime number. Then there is no continuous surjective group homomorphism from $W(A) \to HH_0(E(A))$ which is compatible with $V$ and $\langle \ \rangle$. 
\end{theorem}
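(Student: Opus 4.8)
The plan is to assume a map $\phi\colon W(A)\to HH_0(E(A))$ with all the stated properties and to exploit how rigidly it is pinned down. Since $\phi$ commutes with $V$ it is filtered for the two decreasing $V$-filtrations $V^nW(A)$ and $V^nHH_0(E(A))$, and since it fixes every Teichm\"uller element $\langle a\rangle$ it is completely determined on the subgroup generated by the elements $V^n\langle a\rangle$. As this subgroup is $V$-adically dense, continuity forces $\phi$ to be \emph{unique}, and surjectivity becomes the assertion that the $V^n\langle a\rangle$ topologically generate $HH_0(E(A))$. First I would make this precise by passing to the associated graded groups $\mathrm{gr}^n$ for the $V$-filtration on each side, recalling from the two constructions the identification of the graded pieces with $A$ in degree $0$ and with $HH_0(A)=A/[A,A]$ in higher degrees, and recording that the class of $V^n\langle a\rangle$ is the class of $a$. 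Since $a$ ranges over all of $A$, these classes already exhaust $A/[A,A]$, so $\mathrm{gr}^n\phi$ is \emph{forced to be the identity} on each graded piece.

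The heart of the matter is the non-commutative Witt addition, which I would isolate through the additivity defect of the Teichm\"uller map,
\[
\delta(a,b):=\langle a\rangle+\langle b\rangle-\langle a+b\rangle .
\]
This element lies in $V$-filtration degree $\ge 1$ in either group, so it is detected by its image in $\mathrm{gr}^1$, and because $\phi$ is a homomorphism fixing Teichm\"uller elements one has $\phi(\delta_W(a,b))=\delta_E(a,b)$. Because both $W(A)$ and $HH_0(E(A))$ specialise to the classical $p$-typical Witt vectors when $A$ is commutative, the leading term of $\delta(X,Y)$ is in both cases controlled by $(X+Y)^p-X^p-Y^p$, and here the prime enters decisively. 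Expanding in the free ring and reducing modulo $[A,A]$, the $2^p-2$ mixed monomials of length $p$ fall into cyclic classes; since $p$ is prime, every mixed word of length $p$ is primitive, so each necklace is represented exactly $p$ times and
\[
(X+Y)^p-X^p-Y^p\ \equiv\ p\,N \pmod{[A,A]},\qquad N:=\!\!\sum_{\text{mixed necklaces of length }p}\!\![\text{necklace}].
\]
The content of the theorem is that the two constructions distribute this forced factor of $p$ differently: the ring structure of $E(A)$ carries out the division by $p$ inside $HH_0(E(A))$, giving $\delta_E(X,Y)$ the primitive class $N$, whereas the purely additive construction of $W(A)$ only ever produces the undivided class $pN$.

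The contradiction is then a divisibility statement in the free abelian group $A/[A,A]$. On the one hand the previous paragraph shows $\mathrm{gr}^1\phi$ is the identity, so $\mathrm{gr}^1\phi(N)=N$; on the other hand the defect relation $\mathrm{gr}^1\phi(\delta_W(X,Y))=\delta_E(X,Y)$ forces $\mathrm{gr}^1\phi(pN)=N$, i.e. $p\,\mathrm{gr}^1\phi(N)=N$. Combining these gives $N=pN$, equivalently $(p-1)N=0$. But $N$ is a combination of \emph{distinct} basis necklaces each with coefficient $1$, hence a nonzero, non-$p$-divisible element of the torsion-free group $A/[A,A]$, and $(p-1)N\neq 0$. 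This is the required contradiction. (Equivalently phrased: reconciling the two defects would require an integral homomorphism to divide the primitive class $N$ by $p$, which is impossible.)

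I expect the main obstacle to be the honest computation of the two defects inside the respective constructions --- identifying the non-commutative Witt addition law in $W(A)$ and in $HH_0(E(A))$ precisely enough to verify that their level-$1$ classes are $pN$ and $N$ respectively, together with the identification of the $V$-graded pieces used in the first paragraph. Everything else is formal. The role of the prime is exactly the primitivity of mixed words of prime length, which yields the clean factor $p$ \emph{uniformly} in $p$; this necklace bookkeeping is what replaces the ad hoc case analysis used for $p=2$ in \cite{hp}, giving both the generalisation and the promised simplification. I would also remark that the argument in fact rules out \emph{any} such homomorphism, so the surjectivity hypothesis is needed only to match the formulation of \cite{hp}; a small amount of care with continuity and completeness of the $V$-filtration is required to make the passage to $\mathrm{gr}^1$ rigorous.
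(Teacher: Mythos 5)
There is a genuine gap, and it sits exactly at the step you yourself flag as ``the honest computation'': the claimed discrepancy between the two additivity defects. Your argument needs $\delta_E(X,Y)$ to have leading class $N$ while $\delta_W(X,Y)$ has leading class $pN$, but both constructions in fact produce the \emph{same} (divided) class. The only available handle on either defect is the ghost map, and the ghost coordinates of $\langle X\rangle+\langle Y\rangle-\langle X+Y\rangle$ are $(0,\;X^p+Y^p-(X+Y)^p,\;\dots)$ modulo $[A,A]$ on \emph{both} sides, i.e.\ $(0,-pN,\dots)$; since $V$ multiplies the relevant ghost coordinate by $p$ in both $W(A)$ and $E(A)$, the $\mathrm{gr}^1$ class is $-N$ in both cases. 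This is consistent with (and forced by) the fact that the two functors are naturally isomorphic on commutative rings, where the Witt addition polynomials already perform the division by $p$; Hesselholt's $W(A)$ is built precisely so that this division works modulo $[A,A]$, so it is not true that ``the purely additive construction only ever produces the undivided class $pN$.'' Consequently $\phi(\delta_W)=\delta_E$ yields no contradiction, and indeed the additivity defect cannot detect non-commutativity at leading order: $(X+Y)^p-X^p-Y^p\equiv pN \pmod{[A,A]}$ behaves exactly as the binomial identity does in the commutative case. (Smaller inaccuracies: $\mathrm{gr}^0 W(A)=W_1(A)=A/[A,A]$, not $A$; and your closing remark that surjectivity is superfluous is unsupported.)

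The actual obstruction lives in the \emph{multiplicative} structure of $E(A)$, which your proposal never touches, and this is where surjectivity is essential. The paper first shows (Lemma \ref{ghost-commute}) that any continuous $\Phi$ compatible with $V$ and $\langle\ \rangle$ must intertwine $\overline{\omega}$ and $\overline{\eta}$. It then uses surjectivity to produce $\alpha\in W(A)$ with $\Phi(\alpha)=\langle X\rangle\langle Y\rangle$ --- a product of Teichm\"uller elements, which in the non-commutative setting is \emph{not} a Teichm\"uller element. Its ghost vector $(XY,\,X^pY^p,\dots)$ must then have the constrained form $(\alpha_1,\,\alpha_1^p+p\alpha_2,\dots)$, forcing $X^pY^p\equiv (XY)^p \pmod{p,\,[A,A]}$; this is refuted by mapping to $M_2(\F_p)$ and taking traces (Lemma \ref{p-power}). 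If you want to salvage your framework, the element to feed into it is $\langle X\rangle\langle Y\rangle$ and the failure of multiplicativity of $\langle\ \rangle$, not $\delta(X,Y)$ and the failure of additivity.
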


It is also natural to see whether there is a map in the opposite direction giving relation between $HH_0(E(A))$ and $W(A)$. The next result of this paper will show that  even this is not possible under some additional hypothesis in the case  when $p$ is any prime number and $A= \Z\{X,Y\}$.  

\begin{theorem} \label{main} Let $p$ be any prime number. Let $A = \Z\{X, Y\}$. Then there is no map of sets  from $HH_0(E(A)) \to W(A)$ which commutes with the ghost maps $\overline{\eta}: HH_0(E(A)) \xrightarrow{ \ \overline{\eta} \ } \Big(\ca\Big)^{\N_0}$ and $ W(A) \xrightarrow{\ \overline{\omega}\ } \Big( \ca \Big)^{\N_0}.$
\end{theorem}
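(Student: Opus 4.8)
The plan is to reduce the statement to a containment of images and then to violate that containment. If a set map $f\colon HH_0(E(A))\to W(A)$ with $\overline\omega\circ f=\overline\eta$ existed, then every value of $\overline\eta$ would be a value of $\overline\omega$, i.e. $\im\overline\eta\subseteq\im\overline\omega$. So it suffices to exhibit a single class in $HH_0(E(A))$ whose image under $\overline\eta$ provably cannot lie in $\im\overline\omega$. Observe that this also yields the stronger assertion claimed: not even a set map (let alone a continuous or additive one) can be compatible with the ghost maps.

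For the witness I would exploit the fact that $E(A)$ is a \emph{ring}, and take the class of the product $\langle X\rangle\langle Y\rangle$ of two Teichm\"uller representatives in $HH_0(E(A))$. I will use the recalled properties of the phantom map of $E(A)$: each component $\eta_n\colon E(A)\to A$ is a ring homomorphism and $\eta_n(\langle a\rangle)=a^{p^n}$. Multiplicativity then gives $\eta_n(\langle X\rangle\langle Y\rangle)=X^{p^n}Y^{p^n}$, so that after projecting to $\ca$
$$\overline\eta\big(\overline{\langle X\rangle\langle Y\rangle}\big)=\big(\,\overline{XY},\ \overline{X^{p}Y^{p}},\ \overline{X^{p^2}Y^{p^2}},\ \dots\,\big).$$
The only features I need are that its zeroth coordinate is $\overline{XY}$ and its first coordinate is $\overline{X^{p}Y^{p}}$.

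Next I would record the constraint that $\im\overline\omega$ imposes on the first two coordinates, using that $W(A)$ is only a \emph{group}: its ghost map is given coordinatewise by the $p$-typical Witt formula, $\overline\omega_0(a_0,a_1,\dots)=\overline{a_0}$ and $\overline\omega_1(a_0,a_1,\dots)=\overline{a_0^{\,p}}+p\,\overline{a_1}$, so there is no product element available to mimic $\langle X\rangle\langle Y\rangle$. Hence any $w\in\im\overline\omega$ with $w_0=\overline{XY}$ comes from a component $a_0\equiv XY\pmod{[A,A]}$ and satisfies $w_1\equiv\overline{a_0^{\,p}}\pmod{p}$. Invoking the standard fact that the $p$-power map descends to a well-defined Frobenius $\ca/p\to\ca/p$, $\overline a\mapsto\overline{a^{p}}$ (equivalently, $a\equiv b\pmod{[A,A]}$ forces $a^{p}\equiv b^{p}\pmod{pA+[A,A]}$), this gives $w_1\equiv\overline{(XY)^{p}}\pmod{p}$. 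The proof then closes with a combinatorial computation: since $\ca$ is free abelian on the necklaces (cyclic words) in $X,Y$, and for every prime $p$ the block necklace $X^{p}Y^{p}$ differs from the alternating necklace $(XY)^{p}$, their difference is a nonzero integral combination of basis necklaces, whence $\overline{X^{p}Y^{p}}\not\equiv\overline{(XY)^{p}}\pmod{p}$. Thus $\overline\eta(\overline{\langle X\rangle\langle Y\rangle})$ violates the congruence forced on $\im\overline\omega$, so it lies outside $\im\overline\omega$, and no compatible $f$ can exist.

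The conceptual heart, and the place I expect the real work to sit, is the clash between two different Frobenius behaviours: the ring-homomorphism phantom components of $E(A)$ send the product $\langle X\rangle\langle Y\rangle$ to the honest product $X^{p^n}Y^{p^n}$, whereas the Witt formula underlying the group $W(A)$ can only produce the cyclic power $(XY)^{p^n}$ in degree one. Making this rigorous requires two inputs that must be handled with care and that are exactly what generalises the $p=2$ argument to arbitrary $p$: the multiplicativity of the Cuntz--Deninger phantom components together with $\eta_n(\langle a\rangle)=a^{p^n}$ (which genuinely uses that $E(A)$ is a ring and places the witness in $\im\overline\eta$), and the well-definedness of the Frobenius on $\ca/p$ (which makes the degree-one congruence on $\im\overline\omega$ \emph{forced} rather than merely typical). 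The final necklace distinctness $X^{p}Y^{p}\neq(XY)^{p}$, uniform in $p$, is then elementary.
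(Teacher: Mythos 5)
Your proposal is correct and its overall architecture coincides with the paper's: both take the witness $\langle X\rangle\langle Y\rangle\in HH_0(E(A))$, compare the zeroth and first ghost coordinates, use that the Frobenius $\overline a\mapsto\overline{a^p}$ is well defined on $\ca$ modulo $p$ to force the first coordinate of anything in $\im\overline\omega$ over $\overline{XY}$ to be $\overline{(XY)^p}$ mod $p$, and then derive a contradiction from $X^pY^p\not\equiv (XY)^p$ modulo $p$ and commutators. The one place you genuinely diverge is the proof of that last non-congruence (the paper's Lemma \ref{p-power}). The paper maps $\overline A=\Z/p\Z\{X,Y\}$ to $M_2(\F_p)$ by sending $X,Y$ to the two nilpotent shift matrices $R,S$, notes $R^p=S^p=0$ while $(RS)^p$ is an idempotent of trace $1$, and concludes via the vanishing of the trace on $[M_2(\F_p),M_2(\F_p)]$. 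You instead argue directly in $\ca\otimes\F_p$ using that it is free on necklaces (cyclic words) and that $X^pY^p$ and $(XY)^p$ are distinct necklaces for every prime $p$. Both are valid and uniform in $p$; the trace argument is shorter and self-contained, whereas your necklace argument is more structural but requires you to actually justify that $[A,A]$ is spanned by the differences $uv-vu$ of words, so that $\ca$ really is free abelian on cyclic words --- a standard fact for the free associative ring, but one you should state and prove (one line) rather than merely invoke. With that justification supplied, your proof is complete.
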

\vspace{2mm}

%%%%%%%%%%%%%%%%%%%%%%%%%%%%%%%%%%%%%%%%%%%%%
\noindent{\bf Acknowledgement}:  The author is indebted to the anonymous referee for providing many insightful comments. This work is supported by the SERB-MATRICS grant MTR/2018/000346. \\

%%%%%%%%%%%%%%%%%%%%%%%%%%%%%%%%%%%%%%%%%%%%
\section{Preliminaries}

\noindent In this section we will briefly recall the constructions $W(A)$ from \cite{h1},\cite{h2} and of $E(A)$ from \cite{cd}. \\

\noindent{\bf (1) Hesselholt's construction of  $W(A)$ :}\label{hesselholt-construction} \\

\noindent We will stick to the hypothesis on $A$ as in \cite{h2}. Suppose $A$ is any unital associative ( need not be commutative)  ring $A$. Let $p$ be a prime number and $\N_0 := \N \cup \{0\}$. \\

\noindent Consider the map (called as ghost map) 
$$ \omega: A^{\N_0} \to \Big( \ca \Big)^{\N_0} $$  
$$ \omega(a_0,a_1,a_2,...) := \big(\omega_0(a_0), \omega_1(a_0,a_1), \omega_2(a_0,a_1,a_2),...\big)$$
where $\omega_i$'s are ghost polynomials defined by 
$$\omega_i(a_0,...,a_i) := a_0^{p^i}+pa_1^{p^{i-1}}+p^2a_2^{p^{i-2}}+\cdots+p^i a_i.$$
$\omega$ is merely a map of sets and not a homomorphism of groups. 
For every integer $n\in \N_0$, we also have truncated versions of the above map (denoted again by $\omega$)

\noindent Hesselholt then inductively defines groups $W_n(A)$ (see \cite{h2}) such that 
the map $\omega$ factor through 
$$A^n \xrightarrow{q_n} W_n(A) \xrightarrow{\overline{\omega}} \Big(\ca\Big)^n$$ 
and the following are satisfied 
\begin{enumerate} 
\item $W_1(A) = \ca$. 
\item $q_n$ is surjective map of sets. 
\item $\overline{\omega}$ is an additive homomorphism and is injective if $\ca$ is $p$-torsion free.
\end{enumerate}
\vspace{2mm}

\noindent Define $W(A) := \ilim_n W_n(A)$ and the topology on $W(A)$ is the inverse limit topology. Clearly one also has a factorization of $A^{\N_0}\xrightarrow{\omega} \big(\ca\big)^{\N_0}$ as 
$$ A^{\N_0}\xrightarrow{q} W(A) \xrightarrow{\overline{\omega}} \Big( \ca \Big)^{\N_0}$$
where $q$ is always surjective and where $\overline{\omega}$ is injective if $\ca$ has no $p$-torsion. %Note that, in \cite{h2},  the definition of $W(A)$ for a general ring $A$ is given by using the fact that, given a general ring $A$ we can define a surjective ring homomorphism from $A' \to A$ where $ \frac{A'}{[A',A'] }$ is $p$-torsion free.

\noindent We  have the Verschiebung operator  
$$V: W(A) \to W(A) $$ 
and the Teichm\"uller map 
$$ \langle \ \rangle : A \longrightarrow W(A) $$
which satisfy

$$V (a_0,a_1,\cdots) = (0,a_0,a_1,\cdots)$$
 and
$$\langle a\rangle= ( a,0,0,\ldots)$$

\noindent One can show that $V$ and $\langle \ \rangle$ are well defined and that $V$ is a additive group homomorphism. Similarly for $n\in \N_0$, we have truncated versions (denoted by the same notation). \\

\vspace{2mm}
\noindent{\bf Ghost map :} The group homomorphism $\overline{\omega} : W(A) \to \Big( \ca \Big)^{\N_0}$ given by 
$$\overline{\omega}(a_0,a_1,a_2,...) := \big(\omega_0(a_0), \omega_1(a_0,a_1), \omega_2(a_0,a_1,a_2),...\big)$$
will also be called as the ghost map and $\overline{\omega}$ is injective if $\frac{A}{[A,A]}$ is $p$-torsion free ( See \cite{h2}, page 56 ).  \\ 

%%%%%%%%%%%%%%%%%%%%%%%%%%%%%%%%%%%%%%%%%%%%%
\vspace{2mm}

\noindent{\bf (2) Cuntz and Deninger's construction of the ring $E(A)$ :}\\

\noindent  Let $A$   be any associative, possibly non-unital ring $A$, $p$ be a prime number and $\N_0 := \N \cup \{0\}$. We will refer to \cite[Preliminaries and Page 20]{cd}.\\

\noindent Consider the ring $A^{\N_0}$ with the product topology where $A$ has the discrete topology. 
\begin{enumerate}
\item[(i)]  Let $V: A^{\N_0} \to A^{\N_0}$ be the map defined by  
$V(a_0,a_1,... ) := p(0,a_0,a_1,....)$.
\item[(ii)] For an element $a\in A$, define $ \langle a \rangle \in A^{\N_0}$ by 
$ \langle a \rangle := (a,a^p,a^{p^2},...)$.
\item[(iii)] Let $X(A)\subset A^{\N_0} $ be the closed subgroup generated by 
$$ \Big\{ V^m(\langle a_1 \rangle \cdots  \langle a_r \rangle ) \ | \ m \in \N_0, r\in \N, \ a_i \in A  \ \forall \ i\Big\}.$$
\noindent Similarly, if $I\subset A$ is an ideal, we let $X(I)$ denote the closed subgroup generated by 
$$ \Big\{ V^m(\langle a_1 \rangle \cdots  \langle a_r \rangle ) \ | \ m  \in \N_0, r\in \N, \ a_i \in I \ \forall \ i \Big\}.$$
\end{enumerate}

\noindent For $n\in \N_0$, we also have the  truncated version $X_n(A), n \in \N$ ( See Preliminaries in \cite{cd}). In fact $X(A) =  \ilim_n X_n(A)$ as topological rings. \\

\noindent Let $\Z A$ be the monoid algebra of the multiplicative monoid underlying $A$. Thus the elements of $\Z A$ are formal sums of the form $ \sum_{r\in \Z A} n_r [r] \ \ \ \text{with almost all }n_r=0$. We have a natural epimorphism of rings from $\Z A \to A$ and we let $I$ denote its kernel. One now defines 
$$ E(A) := \frac{X(\Z A)}{X(I)}  \ \ \ \text{and} \ \ \ E_n(A):= \frac{X_n(\Z A)}{X_n(I)} $$ 
Note that $E(A)$ is a Hausdorff topological ring equipped with the ( multiplicative) Teichm\"uller map $\langle \rangle$ and the continuous additive operator $V$ give by  
$$ \langle \ \rangle : A \to E(A) \ \ \ \ \  \langle a \rangle := ([a],[a]^p,[a]^{p^2},...) \ \ \syn{mod} \ X(I).$$
$$V: E(A) \to E(A) \  \ \ \ \  V(a_0,a_1,...,a_n ) := p(0,a_0,a_1,...,a_{n-1})$$
\vspace{2mm}

\noindent The above construction gives a functor $E$ from the category of associative rings to the category of associative rings which is compatible with the map $\langle \ \rangle $ and the additive homomorphism $V$.  \\

%%%%%%%%%%%%%%%%%%%%%%%%%%%%%%%%%%%%%%%%%%%%
\noindent {\bf  Ghost maps :}  Let $X(A) \xrightarrow{\ \gamma \ } \big(\ca\big)^{\N_0}$ be the group homomorphism which is the composition 
$$   X(A) \inj A^{\N_0} \to \Big(\ca\Big)^{\N_0}.$$
\noindent Let $ E(A) \xrightarrow{\ \eta\ } \big(\ca \big)^{\N_0}$ denote the composition 
$$ E(A) \xrightarrow{\ \pi\ } X(A) \xrightarrow{\ \gamma\ } \Big(\ca\Big)^{\N_0}$$

\vspace{2mm}

\noindent Let $HH_0(E(A)) : =  E(A)/\overline{[E(A),E(A)]}$ where $\overline{[E(A),E(A)]}$ is the closure of the commutator subgroup $[E(A),E(A)]$. The subgroup $[E(A), E(A)]$ is not an ideal of $E(A)$. We then have the following induced maps.
\begin{enumerate}
\item  the Teichm\"uller map  $ \langle \cdot \rangle : A \to HH_0(E(A)).$ 
\item Additive group homomorphism  $ V: HH_0(E(A)) \to HH_0(E(A))$
\item The group homomorphisms which are analogous to the ghost homomorphism 
$ W(A) \xrightarrow{\ \overline{\omega}\ } \Big( \ca \Big)^{\N_0}$.
\begin{enumerate}
\item $HH_0(X(A)) \xrightarrow{ \ \overline{\gamma} \ } \Big(\ca\Big)^{\N_0}$
\item $HH_0(E(A)) \xrightarrow{ \ \overline{\eta} \ } \Big(\ca\Big)^{\N_0}.$
\end{enumerate}
\end{enumerate}
%%%%%%%%%%%%%%%%%%%%%%%%%%%%%%%%%%%%
\begin{remark} Note that if $A$ is commutative $p$-torsion free ring then the ghost map $\overline{\eta}$ is a injective group homomorphism ( See \cite[Corollary 2.10]{cd}) . If $A$ is not commutative then $\overline{\eta}$ and $\overline{\gamma}$ need not be injective even if $\frac{A}{[A,A]}$ is $p$-torsion free ( See  \cite[Theorem 4.2]{hp}). 
\end{remark}
\vspace{2mm}
%%%%%%%%%%%%%%%%%%%%%%%%%%%%%%%%%%%%%%%%%

\begin{remark}[Commutative case] \label{commutative-case} Let $A$ be a unital commutative ring. In this case the two constructions above are identified with the classical construction of the ring of $p$-typical Witt vectors.The discussion after \cite[Corollary  2.10]{cd} establishes an isomorphism $W(A) \to E(A)$, natural in $A$,  given by $ \psi(r) = \sum_{n} V_n\langle r \rangle $. This isomorphism preserves $V$ and $\langle \  \rangle $.
 \end{remark}

\noindent The following is an alternative formulation of Theorem \ref{generalise} as suggested by the referee. We have  two functors $A\mapsto W(A)$ and $A\mapsto HH_0(E(A))$ on the category of unital associative rings. Restricted to the subcategory of commutative rings, these functors are naturally isomorphic (see Remark \ref{commutative-case}).  

\begin{theorem}\label{alternative} There is no natural map from $W(A) \to HH_0(E(A))$ which is a continuous surjective group homomorphism and which induces the natural isomorphism in the commutative case. 
\end{theorem}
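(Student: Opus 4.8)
The plan is to show that any natural transformation which induces $\psi$ in the commutative case is automatically compatible with $V$ and $\langle \ \rangle$, and then to invoke Theorem \ref{generalise} at $A=\Z\{X,Y\}$ to reach a contradiction. So I would suppose, for contradiction, that $\Phi$ assigns to each unital associative ring $A$ a continuous surjective group homomorphism $\Phi_A \colon W(A) \to HH_0(E(A))$, that $\Phi$ is natural (that is, $HH_0(E(g)) \circ \Phi_A = \Phi_B \circ W(g)$ for every ring homomorphism $g \colon A \to B$), and that $\Phi_A$ coincides with the isomorphism $\psi$ of Remark \ref{commutative-case} whenever $A$ is commutative.

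First I would establish compatibility with the Teichm\"uller map. Fix $A$ and $a \in A$, and let $g_a \colon \Z[t] \to A$ be the ring homomorphism with $g_a(t)=a$. Since the Teichm\"uller map is natural, $\langle a \rangle = W(g_a)(\langle t \rangle)$ in $W(A)$; as $\Z[t]$ is commutative, $\Phi_{\Z[t]}=\psi$ preserves $\langle \ \rangle$ (Remark \ref{commutative-case}), so by naturality of $\Phi$ we get $\Phi_A(\langle a \rangle) = HH_0(E(g_a))(\psi(\langle t \rangle)) = HH_0(E(g_a))(\langle t \rangle) = \langle a \rangle$. Thus $\Phi_A$ commutes with $\langle \ \rangle$ for every $A$. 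The same argument, using that $V$ is a natural transformation of both functors and that $\psi$ preserves $V$, gives $\Phi_A(V^n \langle a \rangle) = V^n \langle a \rangle$ for all $n \in \N_0$ and $a \in A$, since $V^n\langle a\rangle = W(g_a)(V^n\langle t\rangle)$.

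To upgrade these pointwise identities to the full relation $\Phi_A \circ V = V \circ \Phi_A$ on $W(A)$, I would invoke the Teichm\"uller expansion: every $w \in W(A)$ can be written as a convergent sum $w = \sum_{n\ge 0} V^n\langle w_n\rangle$ for suitable $w_n \in A$, so the elements $V^n\langle a\rangle$ topologically generate $W(A)$ as an abelian group. Because $\Phi_A$ and $V$ are continuous and additive, the identities above then propagate: $\Phi_A(Vw) = \lim_N \sum_{n=0}^{N} V^{n+1}\langle w_n\rangle = V\,\Phi_A(w)$. Hence $\Phi_A$ is compatible with both $V$ and $\langle \ \rangle$. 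Specializing to $A=\Z\{X,Y\}$ now produces a continuous surjective group homomorphism $W(A) \to HH_0(E(A))$ compatible with $V$ and $\langle \ \rangle$, contradicting Theorem \ref{generalise}; therefore no such natural $\Phi$ exists.

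I expect the Verschiebung step to be the main obstacle. Naturality of $\Phi$ only controls its behaviour along ring homomorphisms, whereas $V$ is merely additive and is not induced by any ring map, so the argument genuinely relies on the structural fact that $W(A)$ is topologically generated by the Verschiebungs of Teichm\"uller elements. Verifying this expansion in Hesselholt's non-commutative construction is the delicate point, precisely because the coordinate map $q \colon A^{\N_0} \to W(A)$ is only a map of sets rather than a group homomorphism, so the identity $q(a_0,a_1,\dots) = \sum_{n} V^n\langle a_n\rangle$ must be read off from the construction itself rather than from additivity of $q$. The Teichm\"uller compatibility, by contrast, is a purely formal consequence of the universal property of $\Z[t]$ together with naturality.
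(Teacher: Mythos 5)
Your proposal is correct and follows essentially the same route as the paper: reduce to Theorem \ref{generalise} by showing that naturality along $\Z[t]\to A$, together with the requirement that $\Phi_{\Z[t]}$ be the isomorphism $\psi$ of Remark \ref{commutative-case}, forces $\Phi_A$ to preserve $\langle\ \rangle$ and all elements $V^n\langle a\rangle$. The only difference is that you spell out the topological-generation/Teichm\"uller-expansion step needed to pass from $\Phi_A(V^n\langle a\rangle)=V^n\langle a\rangle$ to full compatibility with $V$, which the paper leaves implicit in the phrase ``it is enough to check.''
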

\begin{proof}
Let $\phi_A: W(A) \to HH_0(E(A))$ be any natural map which induces the given natural isomorphism in the commutative case. By \eqref{generalise}, It is is enough to show that $\phi_A$ preserves $V$ and $\langle \ \rangle$. For an element $a\in A$ consider the map from $\Z[T] \xrightarrow{f} A$ which sends $T$ to $a$. This gives us the following commutative diagram 

$$\xymatrix{
  W(\Z[T])  \ar[rr]^{W(f)}  \ar[d]^{\phi_{\Z[T]}}  & & W(A)  \ar[d]^{\phi_A}\\
E(\Z[T]) = HH_0(E(\Z[T]))  \ar[rr]^-{HH_0(f)} & & HH_0(E(A)) 
} $$ 
That $\phi_A(\langle a \rangle) = \langle a \rangle$ follows from the fact that the other three maps in the diagram are compatible with $\langle \ \rangle$. To check compatibility of $\phi_A$ with $V$ it is enough to check that for all elements $a\in A$ and $n\in \N_0$ 
$$ \phi_A(V^n\langle a \rangle) = V^n(\langle a \rangle).$$
This also follows from the above diagram. 
\end{proof}

\noindent It is not clear to us if a similar reformulation for Theorem \ref{main}, analogous to \eqref{alternative} can be proved.

%%%%%%%%%%%%%%%%%%%%%%%%%%%%%%%%%%%%%%%%%%%%%
\section{Proof of the Theorem \ref{generalise} and  \ref{main}}
\noindent To prove the Theorem \ref{generalise} we will observe that if there exists a continuous map $W(A) \to HH_0(E(A))$ which commutes with $V$ and $\langle \cdot \rangle$ then it has to commute with the ghost maps $\overline{\eta}$ and $\overline{\omega}$. The argument given here is implicit in the proof of the Theorem 1.3 \cite{hp}. For the convenience it is given below. \\

\begin{lemma}\label{ghost-commute} If there exists a continuous map $\Psi: W(A) \to HH_0(E(A))$ which commutes with $V$ and $\langle \cdot \rangle $  then the following diagram must commute
$$\xymatrix{
 HH_0(E(A)) \ar[r]^-{\overline{\eta}} & \Big( \ca\Big)^{\N_0}\ar@{=}[d]\\
W(A) \ar[u]^-{\Psi}\ar[r]^{\overline{\omega}} & \Big(\ca\Big)^{\N_0}
}$$
\end{lemma}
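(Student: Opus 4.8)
The plan is to show that the ghost maps are determined on a topologically dense set of generators, and that $\Psi$ respects this structure because it commutes with $V$ and $\langle\,\cdot\,\rangle$. First I would recall how both ghost maps act on Teichm\"uller representatives and their Verschiebung shifts. On the $W(A)$ side, $\overline{\omega}(\langle a\rangle) = \overline{\omega}(a,0,0,\dots) = (\omega_0(a), \omega_1(a,0), \dots) = (a, a^p, a^{p^2}, \dots)$ modulo $[A,A]$, using the ghost polynomials $\omega_i$, and since $\overline{\omega}$ is an additive homomorphism commuting with $V$ (the Verschiebung on the ghost side acts by the standard shift-and-multiply-by-$p$ rule), the value $\overline{\omega}(V^m\langle a\rangle)$ is completely pinned down. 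On the $HH_0(E(A))$ side the induced ghost map $\overline{\eta}$ is likewise additive and satisfies the analogous formula on $\langle a\rangle$ and its $V$-shifts, because $\eta$ is built from the componentwise projection $A^{\N_0}\to(\ca)^{\N_0}$ and the Teichm\"uller element in $E(A)$ is $([a],[a]^p,[a]^{p^2},\dots)$.

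Next I would verify the diagram commutes on each generator $V^m\langle a\rangle$ directly: compute $\overline{\eta}(\Psi(V^m\langle a\rangle))$ and $\overline{\omega}(V^m\langle a\rangle)$ and check they agree in $(\ca)^{\N_0}$. Because $\Psi$ commutes with $V$ and $\langle\,\cdot\,\rangle$, we have $\Psi(V^m\langle a\rangle) = V^m\langle a\rangle$ in $HH_0(E(A))$, so the computation reduces to comparing $\overline{\eta}(V^m\langle a\rangle)$ with $\overline{\omega}(V^m\langle a\rangle)$. Both sides are governed by the same ghost polynomials applied to the same Teichm\"uller data and the same Verschiebung shift rule on the ghost component $(\ca)^{\N_0}$, so they coincide. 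This settles commutativity on the generating set $\{V^m\langle a\rangle : m\in\N_0,\ a\in A\}$, and by additivity of both $\overline{\eta}\circ\Psi$ and $\overline{\omega}$ it extends to the subgroup these generators span.

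Finally I would pass from the dense subgroup to all of $W(A)$ by continuity. Recall $W(A) = \ilim_n W_n(A)$ carries the inverse limit topology and $q: A^{\N_0}\to W(A)$ is surjective; the finite sums of elements $V^m\langle a\rangle$ form a dense subgroup, essentially because every Witt vector is a (convergent) sum of such Teichm\"uller-Verschiebung terms. Since $\Psi$ is continuous by hypothesis, and both ghost maps $\overline{\omega}$ and $\overline{\eta}$ are continuous homomorphisms into the product group $(\ca)^{\N_0}$, the two continuous maps $\overline{\eta}\circ\Psi$ and $\overline{\omega}$ agree on a dense subgroup and therefore agree everywhere. The main obstacle I anticipate is not any single computation but making precise the density statement, namely that finite combinations of the $V^m\langle a\rangle$ are dense in $W(A)$ for the inverse limit topology, and confirming that the Verschiebung acts on the ghost coordinates $(\ca)^{\N_0}$ by exactly the same formula on both sides so that the generator-level comparison is genuinely an identity rather than merely a compatibility up to the structure maps.
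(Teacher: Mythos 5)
Your proposal is correct and follows essentially the same route as the paper: the paper factors $\Psi$ through $HH_0(X(A))$ and uses the surjection $q\colon A^{\N_0}\to W(A)$ together with the map $\Omega$ given by the Witt polynomials, citing Lemmas 4.1 and 4.2 of \cite{hp} for exactly the step you carry out by hand --- that every Witt vector is a convergent sum $\sum_m V^m\langle a_m\rangle$, that $\Psi$ sends each such term to its counterpart by compatibility with $V$ and $\langle\cdot\rangle$ plus continuity, and that both ghost maps are given by the same Witt polynomials. The density/convergence point you flag as the main remaining obstacle is precisely the content of the cited Lemma 4.2 of \cite{hp}, so your assessment of where the real work lies is accurate.
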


\begin{proof}  Suppose there exists a continuous group homomorphism $W(A) \to HH_0(E(A))$ satisfying the above mentioned properties. Composing with the natural homomorphism $HH_0(E(A)) \to HH_0(X(A))$ we get a map 
$$\Phi: W(A) \to HH_0(X(A)).$$ 

\noindent To prove the result of the lemma it is thus enough to show that if $\Phi$ exists then it has to commute with the ghost map $\overline{\omega}$ and $\overline{\gamma}$. \\

\noindent By following Hesselholt's  construction in \cite{h2}, we know that the map  
$f: A^{\N_0} \to A^{\N_0}$ given by  
$$\underline{a}:= (a_0,a_1,\cdots) \mapsto (w_0(\underline{a}), w_1(\underline{a}), \cdots )$$ factors thorugh $W(A)$ and we get the following, where $q$ is a surjective map.

$$A^{\N_0}  \xrightarrow{ \ q \ }   W(A) \xrightarrow{ \ \overline{\omega} \ } (\frac{A}{[A,A]})^{\N_0}$$

\noindent Consider the set map $\Omega: A^{\N_0} \to A^{\N_0}$ defined  by 
$$\Omega(\underline{a}) = (\omega_0(\underline{a}), \omega_1(\underline{a}), ...)$$ 
where $\omega_n(\underline{a})  = a_0^{p^n}+pa_1^{p^{n-1}}+\cdots+p^na_n$ are the Witt polynomials.
The Lemma 4.1 in \cite{hp} proves that the image of $\Omega$ is contained in $X(A)$. The fact that both $\Omega$ and $\overline{\omega}\circ q$ are given by the same Witt polynomials, we have the following commutative diagram. 
$$\xymatrix{
   X(A)\ar@{->>}[r]^-{\pi} & HH_0(X(A)) \ar[r]^-{\overline{\gamma}}             & \Big( \ca\Big)^{\N_0}\ar@{=}[d]\\
A^{\N_0}  \ar[r]^q \ar[u]^{\Omega}  & W(A) \ar[r]^{\overline{\omega}} & \Big(\ca\Big)^{\N_0}
}$$

\noindent Suppose there exists a continuous map $\Phi:W(A) \to HH_0(X(A))$. By Lemma 4.2 in \cite{hp}, we know that following diagram is commutative. 
$$\xymatrix{
 X(A)\ar@{->>}[r]^-{\pi} & HH_0(X(A)) \\
A^{\N_0} \ar[r]^q \ar[u]^{\Omega}  & W(A) \ar[u]^-{\Phi}
}$$

\noindent  As $q$ is a surjective map and $ \overline{\gamma} \circ \pi \circ \Omega = \overline{\omega}\circ q$ from the first diagram, this commutative square can be extended to the following commutative diagram.

$$\xymatrix{
   X(A)\ar@{->>}[r]^-{\pi} & HH_0(X(A)) \ar[r]^-{\overline{\gamma}}             & \Big( \ca\Big)^{\N_0}\ar@{=}[d]\\
A^{\N_0}  \ar[r]^q \ar[u]^{\Omega}  & W(A) \ar[u]^-{\Phi}\ar[r]^{\overline{\omega}} & \Big(\ca\Big)^{\N_0}
}$$

\noindent This proves that a continuous map $\Phi: W(A) \to HH_0(X(A))$ which commutes with $V$ and $\langle \cdot \rangle$  has to commute with the ghost map $\overline{\omega}$ and $\overline{\gamma}$. Thus 
it will commute with the ghost maps $\overline{\omega}$ and $\overline{\eta}$.

\end{proof}
%%%%%%%%%%%%%%%%%%%%%%%%%%%%%%%%%%%%%%%%%%%%%%

\begin{proof}[Proof of Theorem \ref{generalise}] We will show that there does not exist a continuous surjective map \\
$\Phi: W(A) \to HH_0(E(A))$ which commutes with $V$ and $\langle \cdot \rangle$. As explained in Lemma \ref{ghost-commute}, it is enough to show that there does not exist a continuos surjective map 
$W(A) \to HH_0(X(A))$ which commutes with the ghost maps.  \\

\noindent Let $p$ be any prime number and $ A = \Z\{X,Y\}$. Let $\langle X \rangle \langle Y \rangle \in  HH_0(X(A))$ and let $\alpha : = (\alpha_1, \alpha_2, \cdots ) \in W(A)$ such that $\Phi(\alpha) = \langle X \rangle \langle Y \rangle $.

\begin{align*}
\overline{\gamma}(\langle X \rangle \langle Y \rangle ) & = \overline{\omega}(\alpha)  \ \ \ \ \ \ \ \ \ \ \ \ \ \ \ \hspace{12mm}   \cdots\cdots   (\text{ By \ Lemma \ \ref{ghost-commute} }) \\
                                     & =  (\overline{\alpha_1}, \ \overline{\alpha_1}^p + p\overline{\alpha_2}, \ \overline{\alpha_1}^{p^2}+p\overline{\alpha_2}^{p} + p^2\overline{\alpha_3}, \cdots  ) \\
                                     & = (\overline{\alpha_1},\overline{\alpha_1}^p, \ \cdots ) \  (\ {\rm mod} \ pA) 
 \end{align*}
 
This gives us, 

$$\overline{XY} =  \overline{\alpha}_1 \ ({\rm mod} \ pA),\  \overline{X^pY^p} = \overline{\alpha_1}^p = \overline{XY}^p \ ({\rm mod } \ pA), \ \cdots $$ 

\vspace{2mm}

In the next Lemma, we will show that the equality $\overline{X}^p\overline{Y}^p = \overline{XY}^p \ ({\rm mod } \ pA)$ is not possible. 
\end{proof}
%%%%%%%%%%%%%%%%%%%%%%%%%%%%%%%%%%%%%%%%%%%%

\begin{lemma}\label{p-power} Suppose $p$ is any prime number and $A = \Z\{X, Y\}$. Let $\overline{A} : = \Z/p\Z\{X,Y \}$. Then $X^pY^p \neq (XY)^p ({\rm mod} \  [\overline{A}, \overline{A}])$
\end{lemma}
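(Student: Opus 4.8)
The plan is to exploit the standard description of the additive commutator quotient of a free associative algebra in terms of cyclic words. Write $\overline{A} = \F_p\{X,Y\}$, which is free as an $\F_p$-module on the set of monomials (words) in the letters $X$ and $Y$. By definition $[\overline{A},\overline{A}]$ is the $\F_p$-linear span of the elements $fg-gf$ with $f,g\in\overline{A}$, and by bilinearity it is already spanned by the differences $uv-vu$ where $u,v$ range over monomials. So everything reduces to understanding these monomial commutators.

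First I would introduce the necklace (cyclic word) invariant. Let $N$ be the free $\F_p$-module on the set of cyclic-equivalence classes of monomials, where two monomials are declared equivalent when one is obtained from the other by a cyclic permutation of its letters, and define the $\F_p$-linear map $\tau\colon \overline{A}\to N$ sending a monomial to its cyclic class. The crucial observation is that for any two monomials $u,v$ the products $uv$ and $vu$ are cyclic rotations of one another: if $\deg u = a$, then rotating $uv$ by $a$ positions (deleting the first $a$ letters and appending them) yields exactly $vu$. Hence $\tau(uv)=\tau(vu)$, so $\tau$ annihilates every generator $uv-vu$ of $[\overline{A},\overline{A}]$, and therefore $\tau$ factors through the quotient $\overline{A}/[\overline{A},\overline{A}]$.

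With $\tau$ in hand it suffices to show that $X^pY^p$ and $(XY)^p$ have different images, i.e. that the words $\underbrace{X\cdots X}_{p}\underbrace{Y\cdots Y}_{p}$ and $\underbrace{XY\cdots XY}_{p}$ lie in distinct cyclic classes. This is the combinatorial heart of the argument, and where I expect the only genuine content to sit, although it is elementary. A clean separating invariant is the number of cyclically adjacent equal pairs, that is, the number of indices $i$ (read modulo the length $2p$) at which the $i$-th and $(i+1)$-st letters coincide; this count is manifestly unchanged by cyclic rotation, so it descends to a function on cyclic classes. For $(XY)^p$ the word is strictly alternating, so this count is $0$, whereas for $X^pY^p$ the block of $p$ consecutive copies of $X$ already contributes $p-1$ such pairs. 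Since $p$ is prime we have $p\ge 2$, so $p-1\ge 1$ and the two counts differ; hence the two cyclic classes are distinct.

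Putting these together, $\tau(X^pY^p)\neq\tau((XY)^p)$ in $N$, while $\tau$ kills $[\overline{A},\overline{A}]$; therefore $X^pY^p$ and $(XY)^p$ cannot be congruent modulo $[\overline{A},\overline{A}]$, which is precisely the assertion of the lemma. The main obstacle is essentially bookkeeping: verifying that the necklace map is well defined on the whole commutator subspace (handled by reducing to the monomial generators $uv-vu$) and selecting a cyclic invariant that cleanly separates the two specific words; the hypothesis $p\ge 2$ is exactly what makes this separation work.
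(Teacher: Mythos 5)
Your proof is correct, but it takes a genuinely different route from the paper. You use the cyclic-word (necklace) description of the quotient $\overline{A}/[\overline{A},\overline{A}]$ of the free algebra: reduce the commutator subspace to its monomial generators $uv-vu$, observe that $uv$ and $vu$ are cyclic rotations of one another so that the necklace map $\tau$ factors through the quotient, and then separate the two necklaces $X^pY^p$ and $(XY)^p$ by the rotation-invariant count of cyclically adjacent equal letters ($2(p-1)$ versus $0$). All steps check out, including the reduction of $[\overline{A},\overline{A}]$ to the $\F_p$-span of monomial commutators. The paper instead produces a concrete finite-dimensional witness: the homomorphism $\overline{A}\to M_2(\F_p)$ sending $X,Y$ to the elementary nilpotent matrices $R,S$ with $R^p=S^p=0$ and $\Tr\big((RS)^p\big)=1$, and uses that the trace annihilates $[M_2(\F_p),M_2(\F_p)]$. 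The paper's argument is shorter and requires no structural input about free algebras; yours is more informative, since it exhibits $\overline{A}/[\overline{A},\overline{A}]$ as a free $\F_p$-module on necklaces and shows the two classes are actually linearly independent there (not merely distinct), and the same method would separate any two monomials that are not cyclic rotations of each other.
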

\begin{proof} It is enough to find a homomorphism $f$ from $\overline{A}$ to another ring $B$ such that  $f(X)^pf(Y)^p \neq f(XY)^p   ({\rm mod} \  [B, B])$. \\

\noindent  Let $ B : = M_2(\F_p)$. Consider a homomorphism 
$$f: \overline{A} \to M_2(\F_p))$$ \\
\[ f(X) = R :=  \left( \begin{array}{ccc}               
0 & 0 \\
1 & 0  \\
\end{array}
\right) \]

\[ f(Y) =  S: =  \left( \begin{array}{ccc}               
0 & 1\\
0 & 0  \\
\end{array}
\right) \]
 
 \vspace{2mm}
 
\noindent If $X^pY^p - (XY)^p \in [\overline{A}, \overline{A}]$ then 
$ R^pS^p - (RS)^p   \in [M_2(\F_p), M_2(\F_p)]$.  This will imply that 
$$Tr (R^pS^p-(RS)^p ) = 0.$$ \\
\noindent Now $R^p = S^p = 0$ and $Tr(R^pS^p - (RS)^p) ) = -1$. This implies that 
$X^pY^p - (XY)^p \not\in [\overline{A}, \overline{A}]$. \\ 

\end{proof}

\begin{remark} The above proof of the claim that $X^pY^p - (XY)^p  \not\in [\overline{A}, \overline{A}]$ simplifies the arguments of the main Theorem 2.1 of \cite{hp} for $p=2$ and generalises it to any prime number $p$.
\end{remark}
%%%%%%%%%%%%%%%%%%%%%%%%%%%%%%%%%%%%%%%%%%%%%%
\noindent We will prove below the Theorem \ref{main} by using the Lemma \ref{p-power}.
\begin{proof}[Proof of Theorem \ref{main}]
Suppose $p$ is any prime number, $A = \Z\{X,Y\}$ and $\overline{A} := \Z/p\Z\{X,Y\}$. Suppose there exists a map 
$\rho : HH_0(E(A)) \to W(A) $ which commutes with the ghost maps i.e  $\overline{\omega} \circ \rho = \overline{\eta}$.  Consider the element $ \rho(\langle X \rangle \langle Y \rangle ) = (\alpha_1, \alpha_2, \cdots ) \in W(A)$.  Thus we have the equality, 

$$\overline{\omega} \circ \rho(\langle X\rangle \langle Y \rangle ) = \overline{\eta}(\langle X \rangle \langle Y \rangle) $$

\begin{align*}
\overline{\omega}( \rho(\langle X \rangle \langle Y \rangle ) ) & = (\alpha_1, \alpha_1^p + p \alpha_2, \alpha_1^{p^2}+p\alpha_2^p + p^2 \alpha_3, \  \cdots )  ({\rm mod} \ [A, A])\\
& = (\alpha_1, \alpha_1^p, \alpha_1^{p^2}, \ \cdots ) ({\rm mod} \ [\overline{A}, \overline{A}])
\end{align*}

\noindent We also have,

\begin{align*}
\eta(\langle X \rangle \langle Y \rangle) & = ( XY, X^pY^p, X^{p^2}Y^{p^2}, \ \cdots )  ({\rm mod} \ [A, A])\\
                                            & = ( XY, X^pY^p, X^{p^2}Y^{p^2}, \ \cdots )  ({\rm mod} \ [\overline{A}, \overline{A}])
\end{align*}

\noindent Thus, $\alpha_1^p = (XY)^p = X^pY^p$. This is not possible by the Lemma \ref{p-power}. Thus, there does not exist any  map $\psi: HH_0(E(A)) \to W(A)$ which commutes with the ghost maps. 

\end{proof}

%%%%%%%%%%%%%%%%%%%%%%%%%%%%%%%%%%%%%%%%%%%%%%

\end{document}